\begin{document}

\title{\textsc{Multiplying a Gaussian Matrix by a Gaussian Vector}}

\author{\name Pierre-Alexandre Mattei \email \emph{pierre-alexandre.mattei@parisdescartes.fr} \\
       \addr Laboratoire MAP5, UMR CNRS 8145\\
       Universit\'e Paris Descartes \& Sorbonne Paris Cit\'e \\
       45 rue des Saints-P\`eres, 75006 Paris, France
}

\maketitle

\begin{abstract}
We provide a new and  simple characterization of the multivariate generalized Laplace distribution. In particular, this result implies that the product of a Gaussian matrix with independent and identically distributed columns by an independent isotropic Gaussian vector follows a \emph{symmetric} multivariate generalized Laplace distribution.
\end{abstract}
\begin{keywords}
Gamma distribution; Laplace distribution; Product distribution; Random matrix; Scale mixture; Variance-gamma distribution
\end{keywords}

\section{Introduction}

\cite{wishart} proved that the inner product of two independent bidimensional standard gaussian vectors follows a Laplace distribution. This result is deeply linked to the fact that the Laplace distribution can be represented as an infinite scale mixture of Gaussians with gamma mixing distribution. Specifically, if $\sigma^2$ follows a $\textup{Gamma}(1,1/2)$ distribution and $x|\sigma \sim \mathcal{N}(0,\sigma^2)$, then $x$ follows a standard Laplace distribution\footnote{The shape-rate parametrization of the gamma distribution is used through this paper. Note also that a standard Laplace distribution is centered with variance two.}.
This representation -- which was recently named the \emph{Gauss-Laplace representation} by \cite{ding2017} following a blog post by Christian P. Robert\footnote{https://xianblog.wordpress.com/2015/10/14/gauss-to-laplace-transmutation/} -- is particularly useful if one wants to simulate a Laplace random variable: its use constitutes for example the cornerstone of the Gibbs sampling scheme for the Bayesian lasso of \cite{park2008}.

In this short paper, we are interested in studying links between multivariate counterparts of these two characterizations. More specifically, we give a new simple characterization of the \emph{multivariate generalized Laplace distribution} of \cite*{kotz2001}. As a corollary, we show that the product of a zero-mean Gaussian matrix with in dependent and identically distributed (i.i.d.) columns and a zero-mean isotropic Gaussian vector follows a symmetric multivariate generalized Laplace distribution, a result that has useful applications for Bayesian principal component analysis \citep*{bouveyron2016,bouveyron2017}.

In the remainder of this paper, $p$ and $d$ are two positive integers and $\mathcal{S}_p^+$ denotes the cone of positive semidefinite matrices of size $p\times p$.

\section{The multivariate generalized Laplace distribution}

While the definition of the univariate Laplace distribution is widely undisputed, there exist several different generalizations of this distribution to higher dimensions -- a comprehensive review of such generalizations can be found in the monograph of \citet*{kotz2001}. In particular, \cite{mcgraw1968} introduced a zero-mean elliptically contoured bidimensional Laplace distribution with univariate Laplace marginals. This distribution was later generalized to the $p$-dimensional setting by \cite{anderson1992}, considering characteristic functions of the form
$$\forall \mathbf{u} \in \mathbb{R}^p, \; \phi(\mathbf{u})=\frac{1}{1+\frac{1}{2} \mathbf{u}^T\mathbf{\Sigma}\mathbf{u}},
$$
where $\mathbf{\Sigma}\in \mathcal{S}_p^+$. This distribution was notably promoted by \cite*{eltoft2006} and is arguably the most popular multivariate generalization of the Laplace distribution \citep*[p. 229]{kotz2001}. Among its advantages, this distribution can be slightly generalized to model skewness, by building on characteristic functions of the form
$$\forall \mathbf{u} \in \mathbb{R}^p, \; \phi(\mathbf{u})=\frac{1}{1+\frac{1}{2} \mathbf{u}^T\mathbf{\Sigma}\mathbf{u}- i \boldsymbol{\mu}^T\mathbf{u}},
$$ where $\boldsymbol{\mu}\in \mathbb{R}^p$ accounts for asymmetry. Similarly to the univariate Laplace distribution, this asymmetric multivariate generalization is infinitely divisible \citep*[p. 256]{kotz2001}. Therefore, it can be associated with a specific Lévy process \cite[p. 5]{kyprianou2014}, whose increments will follow yet another generalization of the Laplace distribution, the  \emph{multivariate generalized asymmetric Laplace distribution}. This distribution, introduced by \citet*[p. 257]{kotz2001} and further studied by \citet*{kozubowski2013}, will be the cornerstone of our analysis of multivariate caracterizations of Laplace and Gaussian distributions.

\begin{definition} A random variable $\mathbf{z} \in \mathbb{R}^p$ is said to have a \textbf{multivariate generalized asymmetric Laplace distribution} with parameters $s>0, \boldsymbol{\mu} \in \mathbb{R}^p$ and $\mathbf{\Sigma} \in \mathcal{S}_p^+$ if its characteristic function is $$\forall \mathbf{u} \in \mathbb{R}^p, \; \phi_{ \textup{GAL}_p(\mathbf{\Sigma}, \boldsymbol{\mu},s)}(\mathbf{u})=\left(\frac{1}{1+\frac{1}{2} \mathbf{u}^T\mathbf{\Sigma}\mathbf{u} - i \boldsymbol{\mu}^T\mathbf{u}}\right)^s.$$
It is denoted by $\mathbf{z} \sim  \textup{GAL}_p(\mathbf{\Sigma}, \boldsymbol{\mu},s)$.
\end{definition}
General properties of the generalized asymmetric Laplace distribution distribution are discussed by \citet*{kozubowski2013}. We list here a few useful ones.
\begin{proposition} Let $s>0, \boldsymbol{\mu} \in \mathbb{R}^p$ and $\mathbf{\Sigma} \in \mathcal{S}_p^+$. If $\mathbf{z} \sim  \textup{GAL}_p(\mathbf{\Sigma}, \boldsymbol{\mu},s)$, we have $\mathbb{E}(\mathbf{z})=s\boldsymbol{\mu}$ and $\mathbb{C}\textup{ov}(\mathbf{z})=s(\mathbf{\Sigma} +\boldsymbol{\mu}\boldsymbol{\mu}^T )$. Moreover, if  $\mathbf{\Sigma}$ is positive definite, the density of $\mathbf{z}$ is given by $$
\forall \mathbf{x} \in \mathbb{R}^p, \;  f_\mathbf{z}(\mathbf{x}) = \frac{2 e^{\boldsymbol{\mu}^T\mathbf{\Sigma}^{-1}\mathbf{x}}}{(2\pi)^{p/2}\Gamma(s)\sqrt{\det{\boldsymbol{\Sigma}}}} \left( \frac{Q_{\mathbf{\Sigma}}(\mathbf{x})}{C(\boldsymbol{\Sigma},\boldsymbol{\mu})}\right)^{s-p/2} K_{s-p/2} \left(Q_{\mathbf{\Sigma}}(\mathbf{x}) C(\boldsymbol{\Sigma},\boldsymbol{\mu})\right), $$ where $ Q_{\mathbf{\Sigma}}(\mathbf{x})= \sqrt{\mathbf{x}^T\mathbf{\Sigma}^{-1}\mathbf{x}}$, $C(\boldsymbol{\Sigma},\boldsymbol{\mu})=\sqrt{2 + \boldsymbol{\mu}^T\mathbf{\Sigma}^{-1}\boldsymbol{\mu}}$ and $K_{s-p/2}$ is the modified Bessel function of the second kind of order ${s-p/2}$.
\end{proposition}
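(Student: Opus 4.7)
The plan splits naturally into two parts: the moments, which follow from differentiating the characteristic function, and the density, which I would obtain via a Gaussian scale mixture representation.

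For the mean and covariance, I would simply apply the standard identities $\mathbb{E}(\mathbf{z})=-i\,\nabla\phi(\mathbf{0})$ and $\mathbb{E}(\mathbf{z}\mathbf{z}^T)=-\nabla^2\phi(\mathbf{0})$ to $\phi(\mathbf{u})=g(\mathbf{u})^{-s}$ with $g(\mathbf{u})=1+\tfrac12\mathbf{u}^T\mathbf{\Sigma}\mathbf{u}-i\boldsymbol{\mu}^T\mathbf{u}$. Writing $\nabla g(\mathbf{u})=\mathbf{\Sigma}\mathbf{u}-i\boldsymbol{\mu}$ and $\nabla^2 g(\mathbf{u})=\mathbf{\Sigma}$, evaluation at $\mathbf{u}=\mathbf{0}$ gives $\nabla\phi(\mathbf{0})=is\boldsymbol{\mu}$ and $\nabla^2\phi(\mathbf{0})=-s\mathbf{\Sigma}-s(s+1)\boldsymbol{\mu}\boldsymbol{\mu}^T$, hence $\mathbb{E}(\mathbf{z})=s\boldsymbol{\mu}$ and $\mathbb{C}\textup{ov}(\mathbf{z})=s\mathbf{\Sigma}+s\boldsymbol{\mu}\boldsymbol{\mu}^T$, as announced. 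This is mechanical but it already gives a strong hint that a Gamma-mixing representation is lurking in the background, since the Laplace transform of a $\textup{Gamma}(s,1)$ variable $W$ is exactly $(1-t)^{-s}$.

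That hint drives the density computation. I would first check, at the level of characteristic functions, that $\mathbf{z}$ admits the representation $\mathbf{z}=W\boldsymbol{\mu}+\sqrt{W}\,\mathbf{y}$, where $W\sim\textup{Gamma}(s,1)$ and $\mathbf{y}\sim\mathcal{N}(\mathbf{0},\mathbf{\Sigma})$ are independent. Indeed, conditioning on $W$ and integrating the Gaussian characteristic function against the Gamma density yields $\mathbb{E}[\exp(W(i\boldsymbol{\mu}^T\mathbf{u}-\tfrac12\mathbf{u}^T\mathbf{\Sigma}\mathbf{u}))]=\phi_{\textup{GAL}_p(\mathbf{\Sigma},\boldsymbol{\mu},s)}(\mathbf{u})$. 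When $\mathbf{\Sigma}$ is positive definite, the density is then obtained by marginalizing over $W$:
\[
f_{\mathbf{z}}(\mathbf{x})=\int_0^{\infty}\frac{e^{-\tfrac{1}{2w}(\mathbf{x}-w\boldsymbol{\mu})^T\mathbf{\Sigma}^{-1}(\mathbf{x}-w\boldsymbol{\mu})}}{(2\pi w)^{p/2}\sqrt{\det\mathbf{\Sigma}}}\,\frac{w^{s-1}e^{-w}}{\Gamma(s)}\,dw.
\]
Expanding the quadratic form pulls the factor $e^{\boldsymbol{\mu}^T\mathbf{\Sigma}^{-1}\mathbf{x}}$ out of the integral and leaves an integrand of the form $w^{(s-p/2)-1}\exp\bigl(-\tfrac{Q_{\mathbf{\Sigma}}(\mathbf{x})^2}{2w}-\tfrac{C(\mathbf{\Sigma},\boldsymbol{\mu})^2}{2}w\bigr)$.

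The last step is to recognize this as one of the classical integral representations of the modified Bessel function of the second kind, namely
\[
\int_0^{\infty}w^{\nu-1}\exp\!\bigl(-\tfrac{a}{w}-bw\bigr)\,dw=2\Bigl(\tfrac{a}{b}\Bigr)^{\nu/2}K_{\nu}(2\sqrt{ab}),
\]
applied with $\nu=s-p/2$, $a=Q_{\mathbf{\Sigma}}(\mathbf{x})^2/2$, $b=C(\mathbf{\Sigma},\boldsymbol{\mu})^2/2$, so that $2\sqrt{ab}=Q_{\mathbf{\Sigma}}(\mathbf{x})\,C(\mathbf{\Sigma},\boldsymbol{\mu})$ and $(a/b)^{\nu/2}=(Q_{\mathbf{\Sigma}}(\mathbf{x})/C(\mathbf{\Sigma},\boldsymbol{\mu}))^{s-p/2}$. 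Substituting reproduces the stated formula exactly. The only real obstacle is making the Bessel identity appear in the right normalization; everything else is bookkeeping, and the mixture representation also explains in one line why the moments factor through $\mathbb{E}(W)=s$ and $\mathbb{V}(W)=s$.
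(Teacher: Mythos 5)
Your proof is correct. The paper itself does not prove this proposition --- it simply cites \citet{kozubowski2013} --- so there is no in-paper argument to compare against; but your route is exactly the standard one and is fully consistent with the paper's own machinery: the variance-mean mixture $\mathbf{z}=W\boldsymbol{\mu}+\sqrt{W}\mathbf{y}$ that you verify via the Gamma Laplace transform is precisely the paper's ``generalized Gauss-Laplace representation,'' and the remaining steps (completing the square to extract $e^{\boldsymbol{\mu}^T\mathbf{\Sigma}^{-1}\mathbf{x}}$, then invoking $\int_0^{\infty}w^{\nu-1}e^{-a/w-bw}\,dw=2(a/b)^{\nu/2}K_{\nu}(2\sqrt{ab})$) land exactly on the stated density. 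The moment computations from $\nabla\phi(\mathbf{0})$ and $\nabla^2\phi(\mathbf{0})$ check out, and your closing remark that they also follow in one line from $\mathbb{E}(W)=\mathbb{V}(W)=s$ via the law of total covariance is arguably the cleaner way to present that half of the statement.
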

Note that the $ \textup{GAL}_1(2b^2,0,1)$ case corresponds to a centered univariate Laplace distribution with scale parameter $b>0$. In the symmetric case ($\boldsymbol{\mu}=0$) and when $s=1$, we recover the multivariate generalization of the Laplace distribution of \cite{anderson1992}.

An appealing property of  the multivariate generalized Laplace distribution is that it is also endowed with a multivariate counterpart of the Gauss-Laplace representation.

\begin{theorem}[Generalized Gauss-Laplace representation]
Let $s>0, \boldsymbol{\mu} \in \mathbb{R}^p$ and $\mathbf{\Sigma} \in \mathcal{S}_p^+$. If $u \sim \textup{Gamma}(s,1)$ and $\mathbf{x} \sim \mathcal{N}(0,\mathbf{\Sigma})$ is independent of $u$, we have\begin{equation} u \boldsymbol{\mu} + \sqrt{u}\mathbf{x}\sim  \textup{GAL}_p(\mathbf{\Sigma}, \boldsymbol{\mu},s). \label{normalmean}
\end{equation}
\end{theorem}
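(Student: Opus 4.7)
The plan is to compute the characteristic function of $\mathbf{z} = u\boldsymbol{\mu} + \sqrt{u}\mathbf{x}$ by conditioning on $u$ and show it coincides with $\phi_{\textup{GAL}_p(\mathbf{\Sigma},\boldsymbol{\mu},s)}$ defined above. This is the natural route because the target distribution is specified through its characteristic function, and the conditional law of $\mathbf{z}$ given $u$ is Gaussian with known mean and covariance, so its conditional characteristic function is available in closed form.

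Concretely, I would first observe that, since $\mathbf{x}$ is independent of $u$, conditionally on $u>0$ we have $\mathbf{z} \mid u \sim \mathcal{N}(u\boldsymbol{\mu},\,u\mathbf{\Sigma})$. Hence, for every $\mathbf{u}\in\mathbb{R}^p$,
$$\mathbb{E}\!\left[e^{i\mathbf{u}^T\mathbf{z}} \,\big|\, u\right] = \exp\!\left(-u\bigl(\tfrac{1}{2}\mathbf{u}^T\mathbf{\Sigma}\mathbf{u} - i\boldsymbol{\mu}^T\mathbf{u}\bigr)\right).$$
Setting $a(\mathbf{u})=\tfrac{1}{2}\mathbf{u}^T\mathbf{\Sigma}\mathbf{u} - i\boldsymbol{\mu}^T\mathbf{u}$, the tower property gives $\phi_{\mathbf{z}}(\mathbf{u}) = \mathbb{E}\bigl[e^{-u\,a(\mathbf{u})}\bigr]$, which is the Laplace transform of the $\textup{Gamma}(s,1)$ law evaluated at the complex number $a(\mathbf{u})$. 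I would then plug in the standard identity
$$\int_0^\infty \frac{u^{s-1}}{\Gamma(s)}\,e^{-(1+a)u}\,du = (1+a)^{-s},$$
valid whenever $\operatorname{Re}(1+a)>0$, to conclude that $\phi_{\mathbf{z}}(\mathbf{u}) = \bigl(1+\tfrac{1}{2}\mathbf{u}^T\mathbf{\Sigma}\mathbf{u}-i\boldsymbol{\mu}^T\mathbf{u}\bigr)^{-s}$, which matches the definition.

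The only subtle point is justifying the use of this Laplace-transform identity at a \emph{complex} argument. Here the obstacle is mild: $a(\mathbf{u})$ satisfies $\operatorname{Re}(1+a(\mathbf{u})) = 1+\tfrac{1}{2}\mathbf{u}^T\mathbf{\Sigma}\mathbf{u}\geq 1>0$ because $\mathbf{\Sigma}\in\mathcal{S}_p^+$, so the integrand is absolutely integrable and the formula follows either by Fubini or by analytic continuation of $a\mapsto (1+a)^{-s}$ from the real axis to the right half-plane (both sides being analytic in $a$ on $\{\operatorname{Re}(1+a)>0\}$ and agreeing for $a\geq 0$). A minor bookkeeping point is to fix the principal branch of $(1+a)^{-s}$ for non-integer $s$, which is unambiguous on the right half-plane.
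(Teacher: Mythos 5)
Your proof is correct. Note that the paper does not supply its own argument for this theorem---it defers to \citet[chap.~6]{kotz2001}---but the conditioning/characteristic-function computation you give is precisely the standard one: conditionally on $u$ the vector $u\boldsymbol{\mu}+\sqrt{u}\mathbf{x}$ is $\mathcal{N}(u\boldsymbol{\mu},u\mathbf{\Sigma})$, the tower property reduces the problem to the Gamma Laplace transform at the complex argument $a(\mathbf{u})$, and your observation that $\operatorname{Re}(1+a(\mathbf{u}))\geq 1$ (so the identity extends by analytic continuation on the right half-plane with the principal branch) correctly disposes of the only delicate point. Nothing is missing.
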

A proof of this result can be found in \citet*[chap. 6]{kotz2001}. This representation explains why the multivariate generalized Laplace distribution can also be seen as a multivariate generalization of the \emph{variance-gamma distribution} which is widely used in the field of quantitative finance \citep*{madan1998}. Infinite mixtures similar to \eqref{normalmean} are called \emph{variance-mean mixtures} \citep*{barndorff1982} and are discussed for example by \cite{yu2017}.

Another useful property of the multivariate generalized Laplace distribution is that, under some conditions, it is closed under convolution.

\begin{proposition} \label{summation}
Let $s_1,s_2>0, \boldsymbol{\mu} \in \mathbb{R}^p$ and $\mathbf{\Sigma} \in \mathcal{S}_p^+$. If $\mathbf{z}_1\sim  \textup{GAL}_p(\mathbf{\Sigma}, \boldsymbol{\mu},s_1)$ and $\mathbf{z}_2\sim  \textup{GAL}_p(\mathbf{\Sigma}, \boldsymbol{\mu},s_2)$ are independant random variables, then \begin{equation} \mathbf{z}_1+\mathbf{z}_2\sim  \textup{GAL}_p(\mathbf{\Sigma}, \boldsymbol{\mu},s_1+s_2).
\end{equation}
\end{proposition}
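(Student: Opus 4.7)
The plan is to work directly with characteristic functions, since the defining feature of $\textup{GAL}_p(\mathbf{\Sigma}, \boldsymbol{\mu}, s)$ given in the definition is purely a statement about its characteristic function, and the parameter $s$ appears only as an exponent. This structure is tailor-made for a convolution argument.

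First I would invoke the fact that for independent random vectors the characteristic function of the sum is the product of the characteristic functions, giving
$$
\phi_{\mathbf{z}_1 + \mathbf{z}_2}(\mathbf{u}) = \phi_{\mathbf{z}_1}(\mathbf{u}) \, \phi_{\mathbf{z}_2}(\mathbf{u}) = \left(\frac{1}{1+\tfrac12 \mathbf{u}^T \mathbf{\Sigma} \mathbf{u} - i \boldsymbol{\mu}^T \mathbf{u}}\right)^{s_1} \left(\frac{1}{1+\tfrac12 \mathbf{u}^T \mathbf{\Sigma} \mathbf{u} - i \boldsymbol{\mu}^T \mathbf{u}}\right)^{s_2}.
$$
Since both factors share the same base (this is the reason the hypothesis requires $\mathbf{\Sigma}$ and $\boldsymbol{\mu}$ to match across the two summands), I can collapse them into a single term with exponent $s_1 + s_2$, recognizing the result as $\phi_{\textup{GAL}_p(\mathbf{\Sigma}, \boldsymbol{\mu}, s_1 + s_2)}(\mathbf{u})$.

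Finally, I would conclude by appealing to the uniqueness of characteristic functions, which forces $\mathbf{z}_1 + \mathbf{z}_2 \sim \textup{GAL}_p(\mathbf{\Sigma}, \boldsymbol{\mu}, s_1 + s_2)$. There is no real obstacle: the only mild subtlety is to notice that the base of the exponential is a single-valued complex function (its denominator has strictly positive real part since $\mathbf{u}^T \mathbf{\Sigma} \mathbf{u} \geq 0$), so that the identity $a^{s_1} a^{s_2} = a^{s_1+s_2}$ holds unambiguously for the principal branch used implicitly in the definition. An entirely equivalent route would be to use the Gauss-Laplace representation of the previous theorem together with the additivity of independent Gamma variables sharing the same rate, but the characteristic function proof is shorter and avoids an extra conditional-independence argument.
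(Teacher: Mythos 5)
Your proposal is correct and follows exactly the paper's own argument: multiply the two characteristic functions using independence, combine the common base into the exponent $s_1+s_2$, and identify the result as the $\textup{GAL}_p(\mathbf{\Sigma},\boldsymbol{\mu},s_1+s_2)$ characteristic function. Your additional remark about the principal branch being unambiguous is a nice touch but does not change the substance.
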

\begin{proof} Since $\mathbf{z}_1$ and $\mathbf{z}_2$ are independent, we have for all $\mathbf{u} \in \mathbb{R}^p$, $$\phi_{\mathbf{z}_1+\mathbf{z}_2}( \mathbf{u})=\phi_{ \textup{GAL}_p(\mathbf{\Sigma}, \boldsymbol{\mu},s_1)}( \mathbf{u})\phi_{ \textup{GAL}_p(\mathbf{\Sigma}, \boldsymbol{\mu},s_2)}( \mathbf{u})=\left(\frac{1}{1+\frac{1}{2} \mathbf{u}^T\mathbf{\Sigma}\mathbf{u} - i \boldsymbol{\mu}^T\mathbf{u}}\right)^{s_1+s_2}$$ which is the characteristic function of the $ \textup{GAL}_p(\mathbf{\Sigma}, \boldsymbol{\mu},s_1+s_2)$ distribution.
\end{proof}

\section{A new characterization involving a product between a Gaussian matrix and a Gaussian vector}

We now state our main theorem, which gives a new characterization of multivariate generalized Laplace distributions with half-integer shape parameters.

\begin{theorem} \label{maintheo}
Let $\mathbf{W}$ be a $p \times d$ random matrix with i.i.d. columns following a $\mathcal{N}(0,\mathbf{\Sigma})$ distribution, $\mathbf{y}\sim \mathcal{N}(0,\mathbf{I}_d)$ be a Gaussian vector independent from $\mathbf{W}$ and let $\boldsymbol{\mu} \in \mathbb{R}^p$. We have \begin{equation} \label{mainres}
\mathbf{Wy} + ||\mathbf{y}||_2^2\boldsymbol{\mu}\sim  \textup{GAL}_p(2\mathbf{\Sigma}, 2\boldsymbol{\mu},d/2).
\end{equation}

\end{theorem}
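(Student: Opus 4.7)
The plan is to compute the characteristic function of $\mathbf{Z} := \mathbf{Wy} + \|\mathbf{y}\|_2^2 \boldsymbol{\mu}$ directly and match it to that of $\textup{GAL}_p(2\boldsymbol{\Sigma}, 2\boldsymbol{\mu}, d/2)$. Fix $\mathbf{u} \in \mathbb{R}^p$ and condition on $\mathbf{y}$. Since $\mathbf{u}^T \mathbf{Wy} = \sum_{j=1}^d y_j\, \mathbf{u}^T \mathbf{w}_j$ is a fixed (given $\mathbf{y}$) linear combination of the independent Gaussians $\mathbf{u}^T \mathbf{w}_j \sim \mathcal{N}(0, \mathbf{u}^T \boldsymbol{\Sigma} \mathbf{u})$, one has $\mathbf{u}^T \mathbf{Wy} \mid \mathbf{y} \sim \mathcal{N}(0, \|\mathbf{y}\|_2^2\, \mathbf{u}^T \boldsymbol{\Sigma} \mathbf{u})$, so the inner expectation yields $\exp(-\|\mathbf{y}\|_2^2\, \alpha(\mathbf{u}))$ with $\alpha(\mathbf{u}) := \tfrac{1}{2}\mathbf{u}^T\boldsymbol{\Sigma}\mathbf{u} - i\boldsymbol{\mu}^T\mathbf{u}$. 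Taking the outer expectation and using $\|\mathbf{y}\|_2^2 \sim \chi^2_d$ together with the complex extension of its Laplace transform (valid since $\Re\,\alpha(\mathbf{u}) \geq 0$), one obtains
$$\phi_\mathbf{Z}(\mathbf{u}) = \bigl(1 + 2\alpha(\mathbf{u})\bigr)^{-d/2} = \bigl(1 + \mathbf{u}^T\boldsymbol{\Sigma}\mathbf{u} - 2i\boldsymbol{\mu}^T\mathbf{u}\bigr)^{-d/2},$$
which, after recognizing the factors of $2$, is exactly $\phi_{\textup{GAL}_p(2\boldsymbol{\Sigma}, 2\boldsymbol{\mu}, d/2)}(\mathbf{u})$.

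A more structural alternative, arguably more aligned with the paper's emphasis on the Gauss--Laplace representation, would split $\mathbf{Z} = \sum_{j=1}^d (y_j \mathbf{w}_j + y_j^2 \boldsymbol{\mu})$ into $d$ i.i.d.\ summands and reduce, via Proposition \ref{summation}, to showing each summand is $\textup{GAL}_p(2\boldsymbol{\Sigma}, 2\boldsymbol{\mu}, 1/2)$. For the single-summand case I would set $u := y_1^2/2 \sim \textup{Gamma}(1/2,1)$; by the sign symmetry $\mathbf{w}_1 \stackrel{d}{=} -\mathbf{w}_1$, the joint law of $(y_1 \mathbf{w}_1, y_1^2)$ coincides with that of $(\sqrt{2u}\,\mathbf{w}_1', 2u)$ for an independent $\mathbf{w}_1' \sim \mathcal{N}(0, \boldsymbol{\Sigma})$, so
$$y_1 \mathbf{w}_1 + y_1^2 \boldsymbol{\mu} \stackrel{d}{=} \sqrt{u}\,\mathbf{x} + u\,(2\boldsymbol{\mu}), \qquad \mathbf{x} := \sqrt{2}\,\mathbf{w}_1' \sim \mathcal{N}(0, 2\boldsymbol{\Sigma}),$$
with $\mathbf{x}$ independent of $u$. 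This is precisely the Gauss--Laplace representation of Theorem~1 with parameters $(2\boldsymbol{\Sigma}, 2\boldsymbol{\mu}, 1/2)$, and Proposition \ref{summation} then delivers the claimed shape parameter $d/2$.

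The only genuinely delicate points are (i) in the direct route, justifying the analytic continuation of the chi-squared Laplace transform to complex arguments with nonnegative real part, which is standard, and (ii) in the structural route, the joint-distribution identity exchanging $y_1 \mathbf{w}_1$ for $\sqrt{y_1^2}\,\mathbf{w}_1'$, which relies only on the symmetry of $\mathbf{w}_1$ so that its conditional law given $y_1$ depends on $y_1$ only through $y_1^2$. Everything else is routine algebraic matching, which is why I expect the direct characteristic-function computation to be the most economical write-up.
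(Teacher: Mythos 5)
Your proposal is correct, and your primary route is genuinely different from the paper's. The paper proves the theorem by writing $\mathbf{Wy}+\|\mathbf{y}\|_2^2\boldsymbol{\mu}=\sum_{k=1}^d(y_k\mathbf{w}_k+y_k^2\boldsymbol{\mu})$, showing each i.i.d.\ summand is $\textup{GAL}_p(2\mathbf{\Sigma},2\boldsymbol{\mu},1/2)$ via the generalized Gauss--Laplace representation (Theorem~1) and a sign-symmetry argument, and then invoking Proposition~\ref{summation} --- that is exactly your ``structural alternative,'' down to the substitution $u=y_k^2/2\sim\textup{Gamma}(1/2,1)$; in fact you are slightly more careful than the paper in phrasing the sign-symmetry step as a \emph{joint} distributional identity for $(y_k\mathbf{w}_k,\,y_k^2)$, which is what is actually needed since both components enter $\boldsymbol{\xi}_k$. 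Your preferred direct route instead conditions on $\mathbf{y}$, uses $\mathbf{u}^T\mathbf{Wy}\mid\mathbf{y}\sim\mathcal{N}(0,\|\mathbf{y}\|_2^2\,\mathbf{u}^T\mathbf{\Sigma}\mathbf{u})$ and the $\chi^2_d$ Laplace transform to get $\phi_{\mathbf{Z}}(\mathbf{u})=\left(1+\mathbf{u}^T\mathbf{\Sigma}\mathbf{u}-2i\boldsymbol{\mu}^T\mathbf{u}\right)^{-d/2}$, which indeed matches $\phi_{\textup{GAL}_p(2\mathbf{\Sigma},2\boldsymbol{\mu},d/2)}$. The trade-off: your computation is shorter and self-contained (it does not rely on Theorem~1 or Proposition~\ref{summation}, and in effect re-derives the variance-mean mixture representation on the fly), but it requires justifying the complex extension of the chi-squared Laplace transform and the choice of branch for the complex power --- both standard, handled by noting $\Re(1+2\alpha(\mathbf{u}))\geq 1$ and applying the identity theorem. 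The paper's decomposition avoids all complex-analytic bookkeeping and makes the structural content (a sum of $d$ independent $\textup{GAL}$ variables of shape $1/2$, mirroring $\chi^2_d$ as a sum of $d$ copies of $\chi^2_1$) explicit, which is the representation-based spirit the author explicitly attributes to Gaunt's proof of the univariate case.
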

\begin{proof}
For each $k \in \{1,...,d\}$ let $\mathbf{w}_k$ be the $k$-th column of $\mathbf{W}$, $u_k=y_k^2$ and $\boldsymbol{\xi}_k=y_k \mathbf{w}_k +  y_k^2\boldsymbol{\mu}$. To prove the theorem, we will prove that $\boldsymbol{\xi}_1,...,\boldsymbol{\xi}_d$ follow a $ \textup{GAL}$ distribution and use the decomposition $$\mathbf{Wy} + ||\mathbf{y}||_2^2 \boldsymbol{\mu}=\sum_{k=1}^d \boldsymbol{\xi}_k.$$

Let $k \in \{1,...,d\}$. Since $\mathbf{y}$ is standard Gaussian, $u_k=y_k^2$ follows a $\chi^2(1)$ distribution, or equivalently a $\textup{Gamma}(1/2,1/2)$ distribution. Therefore, $u_k/2 \sim \textup{Gamma}(1/2,1)$. Moreover, note that $ \sqrt{u_k} \mathbf{w}_k=|y_k|\mathbf{w}_k=y_k \textup{sign}(y_k)\mathbf{w}_k\buildrel d \over =y_k\mathbf{w}_k$ since $|y_k|$ and $\textup{sign}(y_k)$ are independent and $\textup{sign}(y_k)\mathbf{w}_k\buildrel d\over =\mathbf{w}_k$. Therefore, according to the generalized Gauss-Laplace representation, we have
$$ \boldsymbol{\xi}_k \buildrel d \over = \sqrt{\frac{u_k}{2}} \sqrt{2}\mathbf{w}_k +  \frac{u_k}{2}2\boldsymbol{\mu} \sim  \textup{GAL}_p(2\mathbf{\Sigma},2\boldsymbol{\mu},1/2).$$

Since $\boldsymbol{\xi}_1,...,\boldsymbol{\xi}_d$ are i.i.d. and following a $ \textup{GAL}_p(2\mathbf{\Sigma},2\boldsymbol{\mu},1/2)$ distribution, we can use Proposition \ref{summation} to conclude that $$\mathbf{Wy} + ||\mathbf{y}||_2^2\boldsymbol{\mu} =\sum_{k=1}^d \boldsymbol{\xi}_k\sim  \textup{GAL}_p(2\mathbf{\Sigma}, 2\boldsymbol{\mu},d/2).$$
\end{proof}

In the symmetric case ($\boldsymbol{\mu}=0$), this result gives the distribution of the product between a Gaussian matrix with i.i.d. columns and a isotropic Gaussian vector.

\begin{corollary}
Let $\mathbf{W}$ be a $p \times d$ random matrix with i.i.d. columns following a $\mathcal{N}(0,\mathbf{\Sigma})$ distribution and let $\mathbf{y}\sim \mathcal{N}(0,\alpha\mathbf{I}_d)$ be a Gaussian vector independent from $\mathbf{W}$. Then \begin{equation}\mathbf{Wy} \sim  \textup{GAL}_p(2\alpha \mathbf{\Sigma}, 0,d/2).\end{equation}
Moreover, if $u$ is a standard Gamma variable with shape parameter $d/2$ and if $\mathbf{x} \sim \mathcal{N}(0,2\alpha \mathbf{\Sigma})$ is a Gaussian vector independent of $u$, then \begin{equation} \label{pca}
\mathbf{Wy} \buildrel d \over = \sqrt{u}\mathbf{x}.
\end{equation}
\end{corollary}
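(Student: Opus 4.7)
The plan is to obtain both assertions as direct corollaries of Theorem \ref{maintheo} and the Generalized Gauss-Laplace representation, with essentially no additional work beyond a rescaling.

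For the first statement, I would reduce to the setting of Theorem \ref{maintheo}, whose hypothesis requires an isotropic $\mathcal{N}(0,\mathbf{I}_d)$ vector. Write $\mathbf{y} = \sqrt{\alpha}\,\tilde{\mathbf{y}}$ with $\tilde{\mathbf{y}} \sim \mathcal{N}(0,\mathbf{I}_d)$, and absorb the scalar $\sqrt{\alpha}$ into the matrix by setting $\tilde{\mathbf{W}} = \sqrt{\alpha}\,\mathbf{W}$. Then $\mathbf{W}\mathbf{y} = \tilde{\mathbf{W}}\tilde{\mathbf{y}}$, and $\tilde{\mathbf{W}}$ is a $p\times d$ random matrix with i.i.d. columns distributed as $\mathcal{N}(0,\alpha\mathbf{\Sigma})$, independent of $\tilde{\mathbf{y}}$. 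Applying Theorem \ref{maintheo} to the pair $(\tilde{\mathbf{W}},\tilde{\mathbf{y}})$ with $\boldsymbol{\mu} = 0$ and covariance $\alpha\mathbf{\Sigma}$ yields $\mathbf{W}\mathbf{y} \sim \textup{GAL}_p(2\alpha\mathbf{\Sigma},0,d/2)$ directly.

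For the second statement (equation \eqref{pca}), I would simply invoke the Generalized Gauss-Laplace representation with parameters $s = d/2$, $\boldsymbol{\mu} = 0$, and covariance $2\alpha\mathbf{\Sigma}$: if $u \sim \textup{Gamma}(d/2,1)$ is independent of $\mathbf{x} \sim \mathcal{N}(0,2\alpha\mathbf{\Sigma})$, the representation gives $\sqrt{u}\,\mathbf{x} \sim \textup{GAL}_p(2\alpha\mathbf{\Sigma},0,d/2)$. Comparing with the first part of the corollary, both $\mathbf{W}\mathbf{y}$ and $\sqrt{u}\,\mathbf{x}$ share this distribution, which establishes the claimed equality in distribution.

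No genuine obstacle is anticipated, since each step is an application of a result already stated in the paper. The only minor point to justify is the legitimacy of absorbing $\sqrt{\alpha}$ into $\mathbf{W}$, which is immediate from the independence of $\mathbf{W}$ and $\mathbf{y}$ together with the fact that deterministic scalar multiplication of a Gaussian matrix preserves the i.i.d.\ structure of its columns and multiplies the common covariance by $\alpha$.
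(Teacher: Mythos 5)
Your proposal is correct and matches the route the paper intends: the corollary is stated as an immediate consequence of Theorem \ref{maintheo} (with $\boldsymbol{\mu}=0$ and the scalar $\sqrt{\alpha}$ absorbed into $\mathbf{W}$, exactly as you do) combined with the Generalized Gauss-Laplace representation applied with $s=d/2$ and covariance $2\alpha\mathbf{\Sigma}$. Your remark on why the rescaling preserves the i.i.d.\ column structure is the only detail worth spelling out, and you handle it correctly.
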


Less general versions of Theorem \ref{maintheo} have been proven in the past, dating back to the derivation of the inner product of two i.i.d. standard Gaussian vectors by \cite{wishart}. In particular, the unidimensional case ($p=1$) was recently proven by \cite{gaunt2014} in order to obtain bounds for the convergence rate of random sums involving Gaussian products. The multivariate symmetric isotropic case ($\boldsymbol{\mu}=0$ and $\mathbf{\Sigma}$ proportional to $\mathbf{I}_p$) was proven by \cite*{bouveyron2016} in order to derive the marginal likelihood of the noiseless probabilistic principal component analysis model of \cite{roweis1998}. While the proof of \cite*{bouveyron2016} relied on characteristic functions and the properties of Bessel functions, the proof that we presented here is closer in spirit to the one of \cite{gaunt2014}, based on representations of distributions.

\section{Perspectives}

The new characterization presented in this paper may notably prove useful in two contexts.

First, it indicates a new way of handling situations involving the product of a Gaussian matrix and a Gaussian vector. An important instance is the Bayesian factor analysis model \citep{lopes2004}, of which principal component analysis is a particular case. In this framework, the marginal distribution of the data, which is essential for model selection purposes, can be derived using representation \eqref{pca} together with the Gauss-Laplace representation \citep*{bouveyron2016,bouveyron2017}.

Moreover, our characterization offers a means to get around problems encountered when dealing with distributions related to the GAL distribution. For example, representation \eqref{mainres} might lead to alternative estimation strategies for some problems related to portfolio allocation \citep{mencia2009,breymann2013ghyp} or cluster analysis \citep*{mcnicholas2013,franczak2014}.

\section*{Acknowledgements}
I thank Charles Bouveyron, Pierre Latouche, Brendan Murphy and Christian Robert for fruitful advices and discussions. Part of this work was made during a visit to University College Dublin, funded by the Fondation Sciences Math\'ematiques de Paris (FSMP).

\bibliography{biblio}

\end{document}